\title{
 On the  lower central factors of groups} \author{ Ted Hurley \thanks{National
    University of Ireland Galway, Ireland. Ted.Hurley@nuigalway.ie}} \date{}
\newcommand{\ga}{\gamma}
\newtheorem{theorem}{Theorem}[section]}{}
\newcommand{\qed}{\hfill{$\text{QED}$}}
\newcommand{\propref}[1]{Proposition~\ref{prop:~1}}
\newenvironment{proof}{\noindent\textbf{Proof:}}{\qed}}{} 
\newtheorem{theorem}{Theorem}[section]}{}
\newtheorem{proposition}{Proposition}}{}
\newtheorem{corollary}{Corollary}[section]}{}
\begin{document}
\maketitle

\begin{abstract} A general method for calculating or 
constructing lower central
  factors of groups is presented. {\it Relative basic
    commutators} are defined.
\end{abstract}

\leftmargin=1pt 
\section{Introduction}
For basic definitions  and results the reader may consult any
standard book   on group theory 
as for example \cite{lynshupp}, \cite{mks} or \cite{hneumann}. 
For a group $G$, the lower central series of $G$ is
defined by:
$ \gamma_1(G) = G$ and $\gamma_{n+1} (G) = [\gamma_n(G),G]$ where $[A,B]$
denotes the subgroup generated by all commutators $[a,b] =
a^{-1}b^{-1}ab$ with $a \in A, b\in B$. The factor group of $G$ by the
normal subgroup $H$ will be denoted by
$\frac{G}{H}$ or $G/H$ as is convenient.

The $n^{th}$ {\em lower central factor of $G$} is the factor group 
$\frac{\gamma_n(G)}{\gamma_{n+1}(G)}$.

Every group is the factor group of a free group. Suppose then 
$G$ is presented as $ G \cong F/R$ where $F$ is free and $R$
is a normal subgroup of $F$. The following result follows from
standard isomorphism theorems. For completeness we include a proof.

\begin{proposition}\label{prop1} Suppose $G \cong F/R$. Then 
$\frac{\gamma_n(G)}{\gamma_{n+1}(G)} \cong
\frac{\frac{\gamma_n(F)}{\gamma_{n+1}(F)}}{\frac{R\cap \gamma_n(F)}{R \cap
    \gamma_{n+1}(F)}}$.
\end{proposition}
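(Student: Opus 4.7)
The plan is to chase the standard isomorphism theorems twice, with one application of the Dedekind (modular) law in the middle to rewrite an intersection as a product.

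First I would identify $\gamma_n(G)$ with a subgroup of $F/R$. Since $G \cong F/R$ and the lower central series is preserved under surjections, $\gamma_n(G)$ corresponds to $\gamma_n(F)R/R$ and $\gamma_{n+1}(G)$ to $\gamma_{n+1}(F)R/R$. The third isomorphism theorem then gives
\[
\frac{\gamma_n(G)}{\gamma_{n+1}(G)} \;\cong\; \frac{\gamma_n(F)R/R}{\gamma_{n+1}(F)R/R} \;\cong\; \frac{\gamma_n(F)R}{\gamma_{n+1}(F)R}.
\]

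Next I would peel off the $R$ on top. Writing $\gamma_n(F)R = \gamma_n(F)\cdot\gamma_{n+1}(F)R$ (since $\gamma_{n+1}(F)\le \gamma_n(F)$) and applying the second isomorphism theorem yields
\[
\frac{\gamma_n(F)R}{\gamma_{n+1}(F)R} \;\cong\; \frac{\gamma_n(F)}{\gamma_n(F)\cap \gamma_{n+1}(F)R}.
\]
This is the point where the argument could slip: the denominator must be simplified correctly. Here I invoke Dedekind's modular law, available because $\gamma_{n+1}(F)\le \gamma_n(F)$, to conclude
\[
\gamma_n(F)\cap \gamma_{n+1}(F)R \;=\; \gamma_{n+1}(F)\bigl(\gamma_n(F)\cap R\bigr).
\]
This is the main (small) obstacle, and is the only non-purely-formal step of the proof.

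Finally, substituting back and applying the third isomorphism theorem once more gives
\[
\frac{\gamma_n(F)}{\gamma_{n+1}(F)(R\cap \gamma_n(F))} \;\cong\; \frac{\gamma_n(F)/\gamma_{n+1}(F)}{\gamma_{n+1}(F)(R\cap \gamma_n(F))/\gamma_{n+1}(F)},
\]
and a last use of the second isomorphism theorem, together with $R\cap \gamma_n(F)\cap \gamma_{n+1}(F) = R\cap \gamma_{n+1}(F)$, identifies the lower factor with $(R\cap \gamma_n(F))/(R\cap \gamma_{n+1}(F))$, which is exactly the required isomorphism.
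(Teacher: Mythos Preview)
Your proof is correct and follows essentially the same route as the paper's: identify $\gamma_n(G)/\gamma_{n+1}(G)$ with $\gamma_n R/\gamma_{n+1} R$, use the second isomorphism theorem to rewrite this as $\gamma_n/(\gamma_n\cap \gamma_{n+1}R)$, apply the modular law to turn the denominator into $\gamma_{n+1}(R\cap\gamma_n)$, and then quotient by $\gamma_{n+1}$ and use the second isomorphism theorem once more. The only difference is cosmetic: you name the isomorphism theorems and the Dedekind modular law explicitly, whereas the paper simply writes out the chain of isomorphisms.
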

\begin{proof} 
Write $\gamma_i$ for $\gamma_i(F)$. From $G \cong F/R$, it follows 
that $\ga_i(G) \cong \frac{\ga_i.R}{R}$. Hence by 
standard isomorphism theorems:

\begin{align*} \frac{\gamma_n(G)}{\gamma_{n+1}(G)} & 
\cong \frac{(\ga_n.R)/R}{(\ga_{n+1}.R)/R} \\ &
\cong \frac{\ga_n.R}{\ga_{n+1}.R} \\ &= \frac{\ga_n.(R.\ga_{n+1})}{
  (R.\ga_{n+1})}  \\ & \cong 
\frac{\ga_n}{(\ga_n\cap (R.\ga_{n+1}))} \\ &= \frac{\ga_n }{(R\cap \ga_n)
.\ga_{n+1}} 
\end{align*} 

Thus \begin{align*} 
\frac{\gamma_n(G)}{\gamma_{n+1}(G)} & \cong \frac{\ga_n/\ga_{n+1}}{(R\cap
 \ga_n).\ga_{n+1}/\ga_{n+1}}  \\ & \cong 
\frac{\ga_n/\ga_{n+1}}{(R\cap \ga_n)/((R\cap \gamma_n) \cap
 \ga_{n+1}))} \\ & 
 \cong \frac{\ga_n/\ga_{n+1}}{(R\cap \ga_n)/(R\cap \ga_{n+1})}\end{align*}
\end{proof}

\medskip

Note also that $ \frac{R \cap \gamma_n}{R\cap \gamma_{n+1} } \cong
\frac{(R\cap\gamma_n)\gamma_{n+1}}{\gamma_{n+1}}$ and this is
a free abelian group as indeed it is a subgroup of the free abelian
group $\ga_n(F)/\ga_{n+1}(F)$.  Thus
if $R\cap \ga_n \mod \ga_{n+1}$ is known, the $n^{th}$ lower
central factor of $G$ is a factor group of the known (see below) free abelian
group $\ga_n/\ga_{n+}$ by the (now known) free abelian group $R\cap \ga_n
\mod \ga_{n+1}$. 

\medskip
\section{Basic commutators}
The structure of $\frac{\gamma_n(F)}{\gamma_{n+1}(F)}$, the $n^{th}$
lower central factor of the free group, is well-known as the free
abelian group on the {\em basic commutators} of weight $n$.

These basic commutators can be defined as follows:

Let $F$ be free on a set $ X$. The basic commutators of weight 1 are
the elements of $X$. Suppose then the basic commutators of weight $ <
n$, with $n\geq 2$,  have been defined and ordered. 
Then a  basic commutator of weight $n$ is a
commutator of the form $[b,c]$ where $b,c$ are basic commutators of
weight $< n$ such that weight $b$ + weight $c = n $ {\it and} if $b = [d,e]$
where $d,e,$ are basic commutators then $ e \leq c$.

This condition that the second component in a basic commutator must be
less than or equal to the next component is a `consequence' of a   
Jacobi-type identity:

$[x,y,z][y,z,x][z,x,y] \in \gamma_{n+1}(G)$

where $[x,y,z] \in \gamma_n(G)$. The Jacobi Identity in groups is a
``$\mod \gamma_{n+1}(G)$'' identity which is  not a direct group identity; 
this indeed makes
group identities and their consequences 
more complicated in general 
-- unless of course one is only interested in working
modulo some term of the lower central series which is often sufficient.  

The most general `free version' of a Jacobi/Witt-Hall  type identity is

$[c,b,a]=[c,b][c,a]^{-1}[b,a,c]^{-1}[b,a]^{-1}[c,b][c,a][c,a,b][b,a]$

and this enables $a$ to be switched to its `correct position' when
it is less than $b$ and $c$ in some ordering. 
\medskip 

Basic commutators were introduced by Philip Hall and
Marshall Hall, see \cite{hall}, \cite{coll}, \cite{mhall}, \cite{hneumann} and
\cite{mks}. The notes  in \cite{hall}  originated in a series of
lectures given by P Hall in 1957 and were available in manuscript 
 form and then 
reproduced with minor changes and additions 
by Queen Mary College College Lecture Notes series in 1969. The notes are
also available within the collected works of  Philip Hall \cite{coll}. 
 
Basic commutators  proved and are still proving very useful in many
areas.  Computations which involve a `collecting
process' of some form follow on from  the collecting process
introduced in 
\cite{hall} and used in the theory of basic commutators and basic
products.  
The connections between free groups, free Lie Algebras, 
free associative rings,  basic commutators in
groups and Lie Algebras, the ideas of  
{\it basic products} (in free associative rings), Hall-Witt Theorems, 
identities, Baker-Campbell-Hausdorff formulae etc. 
 are presented beautifully
and in a very coherent manner in \cite{hall} and \cite{mks}. 

The
relationship with {\it Fox derivatives} is also clear from
\cite{mks}. Basics commutators are  used by
Gruenberg \cite{gruenberg} for his famous result that soluble  Engel
groups are locally nilpotent. 
Gruenberg shows that for a finitely generated and soluble group  
 the series of basic commutators eventually 
grow to  `Engel-like' and from this follows the nilpotency of the group. Whole
theories of basic commutators are presented in \cite{mward} which
haven't received the attention they should  and implicitly contain
further advancement and potential applications. A general `free system'
of free basic-commutator-like generators relating the 
free structure of terms of the lower
central series and related groups 
is given in \cite{hurward}; this follows from initial work of Ward
\cite{ward} and \cite{ward1}. They also appear quite frequently in
papers on Burnside's problem and many other areas.

 Is it time to `go back to basics'?

\subsection{Factor of a free abelian group by a free abelian group}

The basic commutators gives the structure of the free abelian group 
$\frac{\gamma_n(F)}{\gamma_{n+1}(F)} $  when $F$ is a free group and
are thus  a starting point for the study of lower central factors of
any group. 

From  

$\frac{\gamma_n(G)}{\gamma_{n+1}(G)} \cong
\frac{\frac{\gamma_n}{\gamma_{n+1}}}{\frac{R\cap \gamma_n}{R \cap
    \gamma_{n+1}}}$

it is seen {\it the structure of the lower central factors of $G$ is known
  once the structure of $\frac{R\cap \gamma_n}{R\cap
    \gamma_{n+1}}$, which is $ R\cap \ga_n \mod \ga_{n+1}$,
  is known.}

From now on write $\ga_nG$ for $\ga_n(G)$.
\subsection{Examples of use} 

\subsubsection{Free metabelian example}

Consider $R = \gamma_2\gamma_2F = F^{''}$, the second
derived group of $F$. We wish to know the structure of the lower
central factors of $F/F^{''}$.

It is relatively easy to show that if given $ a \in F^{''}$ then,
modulo  the $n^{th}$ term of the lower central series of $F$, $a$ is a
product of basic commutators of the form $[[...],[...]]$, that is, 
 each
basic commutator occurring in an element of $a\in F^{''}$ as the
product of basic commutators modulo $\ga_nF$  
has the `shape' of an element of $F{''}$.

Hence for this $R = F^{''}$,  $R\cap \gamma_nF$ modulo
$\gamma_{n+1}F$ is simply the free abelian group on the basic
commutators of weight $n$ of the form $[[...],[...]]$. 

Thus every element in the lower central factors of $F/F^{''}$ can be
written uniquely as a product of basic commutators of the form
$[.......]$ - no double brackets - and thus is free abelian on the
{\em simple basic commutators}, that is  basic commutators of the form
$[x_{i_1},x_{i_2},x_{i_3}, \ldots, x_{i_n}]$ with $ i_1 > i_2 \leq i_3
\ldots \leq i_n$. This is a result due to Magnus (unpublished) 
-- see Hanna Neumann's
book \cite{hneumann} pages 107-109  which has a different 
  and certainly longer proof.

\subsubsection{Other examples}
 
This method may also be used on other free groups in a variety. Take $R =
\gamma_2\gamma_3F$. (The group $F/R$ is the free group in the variety
abelian-by-(nilpotent of class $\leq 2$).)

Every element in this $R$ will be a product of basic commutators of
the form $[[\geq 3], [\geq 3]]$, that is those that have `shape' in
$\gamma_2\gamma_3F$; the others then will be a basis for the lower
central factors of $\frac{F}{\gamma_2\gamma_3F}$. So for example
$[[3],[2],\ldots,[2]]$ will be in this basis as well as  the
simple basic commutators.

In fact this trick works for any
$R = \gamma_{n_1}\gamma_{n_2}\gamma_{n_3}\ldots\gamma_{n_m}F$  in a 
{\em polynilpotent series} of $F$ for a
sequence $n_1, n_2, n_3 \ldots, n_m$. Included here would, for
example, 
be the derived series in which each $n_i=2$. The group $F/R$ for this $R$ is
known as the {\em free polynilpotent group} (relative to the sequence
$n_1,n_2,n_3,\ldots,n_m$).

Every element in $R$ is congruent modulo $\gamma_{n+1}F$ to a product
of basic commutators whose `shape' is in $R$ and the lower central
factors of $F/R$ is the free abelian group on the rest, that is  those
basic commutators whose
`shape' is less than (or not in) $R$. See Martin Ward \cite{mward} for
much more detail on this and many forms and shapes of basic
commutators and basic
sequences. 

\subsection{Further areas of application} 
The `shape' of a basic commutator has also been exploited in
identifications of various intersections in free groups and for 
groups determined by modules within the free  group ring; see for
example \cite{hur1},\cite{hur}, \cite{hur3} for solutions to the
{\it Fox-type and Lie Dimension 
problems} and other related identities in free groups, where basic-type
commutators come into play.

The method  also gives a way of identifying

$\gamma_{n_1}\gamma_{n_2}\gamma_{n_3}\ldots\gamma_{n_m}F \cap
\gamma_wF $ for any integer $w$. For example

$$ F^{''} \cap \gamma_wF = \prod_{i + j = w; i,j\geq 2}[\gamma_iF,
  \gamma_jF]$$

$$\gamma_2\gamma_3F \cap \gamma_wF = \prod_{i + j = w; i,j\geq
  3}[\gamma_iF, \gamma_jF]$$ $$ \gamma_3\gamma_2F \cap \gamma_wF =
\prod_{i + j + k = w; i,j,k \geq 2}[\gamma_iF, \gamma_jF, \gamma_kF]$$
 
\medskip 

or more generally:

\medskip

\begin{eqnarray*}
\lefteqn{ \gamma_{n_1}\gamma_{n_2}\gamma_{n_3}\ldots\gamma_{n_m}F \cap
  \gamma_wF = } \\ & & \\ & & \prod [(\gamma_{n_2}\gamma_{n_3} \ldots
  \gamma_{n_m}F \cap \gamma_{i_1}F), (\gamma_{n_2}\gamma_{n_3} \ldots
  \gamma_{n_m}F \cap \gamma_{i_2}F), \\ & & \ldots,
  (\gamma_{n_2}\gamma_{n_3} \ldots \gamma_{n_m}F \cap
  \gamma_{i_{n_1}}F)]
\end{eqnarray*}

where the product is over all integers $i_1, i_2, \ldots , i_{n_1}$
with $i_1 + i_2 + \ldots + i_{n_1} = w$

and each of $\gamma_{n_2}\gamma_{n_3}\ldots\gamma_{n_m}F \cap
\gamma_{i_j}F$ is determined by induction.
\medskip

\medskip
 
\section{Relative basic commutators}
 Suppose $F$ is finitely
generated and that $R$ is finitely generated as a normal subgroup. An
algorithm may be given for the determination of the structure of
$\frac{R \cap \gamma_nF}{R\cap \gamma_{n+1}F}$ in terms of free
generators of $\frac{\gamma_nF}{\gamma_{n+1}F}$; this gives an
algorithm for the determination of the the lower central factors of
$G$.

The way to do this is to construct {\em basic commutators relative to
  $R$ } in a process now to be defined.

Suppose $F$ is freely generated by $X = \{x_1, x_2, \ldots, x_n\}$ and
that $R$ is generated as a normal subgroup by $ A = \{r_1, r_2,
\ldots, r_m\}$. We assume no element of $A$ occurs in $X$ and order $A
\cup X$ by saying the elements of $A$ come after those of $X$.

Nielsen transformations were originally constructed to show that every
subgroup of a free group is free but has since found applications in
many areas. 

A {\em Nielsen Transformation} on a set $(a_i)_{i \in I}$ is one of
the following:
\begin{enumerate}
\item Exchange two of the $a_j$.
\item Replace an $a_j$ by $a_j^{-1}$.
\item Replace and $a_j$ by $a_ja_k, j \neq k$.
\item Carry out substitutions of types 1,2,3, repeatedly a finite
  number of times.

\end{enumerate}

See for example \cite{mks}, \cite{lynshupp} for further details on
Nielsen transformations.

A Nielsen transformation transforms a set of free generators onto a
set of free generators in such a way that the free group generated by
each set is the same.

Let $b$ be a basic commutator of weight $\geq$ 2 with $b = [a,c], a >
c$ for basic commutators $a,c$ and if $a = [d,e]$ then $e \leq c$. We
call $c$ the {\em second component } of $b$. If Nielsen
transformations are applied to a basic commutator we wish to define
the second component of such an expression.  Suppose then the second
components of $b$ and $b^{'}$ have been defined. In a transformation
of type 1 the second components are the same as the original second
components. We define the second component of $b^{-1}$ to be that of
$b$.


First let us consider $\frac{R}{R \cap \gamma_2F} \cong
\frac{R\gamma_2F}{\gamma_2F}$.  Let $H_1$ denote the subgroup of $F$
generated by $A$. Then we may construct a set of free generators
$x_{1_1}, x_{1_2}, \ldots , x_{1_{m_1}}$ for $F$ and a set of free
generators $y_{1_1}, y_{1_2}, \ldots , y_{1_{s_1}}$ for $H$ such that

$$y_{1_i} \equiv x_{1_i}^{d_{1_i}} \quad modulo \quad \gamma_2F, \quad
1\leq i \leq s_{1_1} \hspace{.2in} * $$
$$y_{1_i} \equiv 1 \quad modulo \quad \gamma_2F, \quad s_{1_1} < i
\leq s_{1_2} \hspace{.2in} **$$

where $0 < d_{1_i}$ divides $d_{1_{i+1}}$, and $s_{1_1} \leq m_1$.

This is done in the usual way by performing Nielsen Transformations
and reducing an $n\times m$ matrix of integers to diagonal from. Of course
this immediately gives the structure of $\frac{R}{R \cap \gamma_2F}
\cong \frac{R\gamma_2F}{\gamma_2F}$ as the direct product of infinite
cyclics and cyclics of orders $d_{1_i}$.

Note that $R = H_1^F$, the normal closure of $H_1$ in $F$.

Can we extend this to $\frac{R \cap \gamma_2F}{R \cap \gamma_3F}$ and
perhaps higher?

Consider first of all $\frac{R \cap \gamma_2F}{R \cap \gamma_3F}$.

What we do is construct {\em basic commutators relative to R} of
degree two in this case.

We define an $\overline{R}_2$ - basic to be an element of $\{[y_{1_i},
  x_{1_j}]; i > j\}$ $\cup$ $\{[x_{1_j}, y_{1_i}]; j > i \}$ for $ 1
\leq i \leq s_{1_i}$, and for all $ j, 1 \leq j \leq m_1$.

An $\overline{R}_2$ - basic is an element in $R \cap \gamma_2F$ and is
easy to show that the set of $\overline{R}_2$ - basics is linearly
independent modulo$ R \cap \gamma_3F$.

We haven't got a full generating set for $\frac{R \cap \gamma_2F}{R
  \cap \gamma_3F}$ as we have to include some more elements.

Let $H_2$ be the group defined by $\{\overline{R}_2$ - basics $\} $
$\cup \{y_{1_i}, \quad s_{1_1} < i \leq s_{1_2} \}$ $\cup \{[y_{1_i},
  x_{1_i}], \quad 1 \leq i \leq s_{1_1}\}$.

Then $H_2$ is a finitely generated subgroup of $\gamma_2F \cap R$. In
fact it can be shown that $H_2$ will generate $\frac{R \cap
  \gamma_2F}{R \cap \gamma_3F}$. We also know a free generating set
for $\frac{\gamma_2F}{\gamma_3F} $ as the free abelian group on the
basic commutators of weight 2.

Apply the diagonalisation process again to $H_2$ and
$\frac{\gamma_2F}{\gamma_3F} $ will give the structure of $\frac{R
  \cap \gamma_2F}{R \cap \gamma_3F}$.

Then there exists a set of free generators $x_{21}, x_{22}, \ldots
,x_{2m_2}$ for $\frac{\gamma_2F}{\gamma_3F}$ and a set of generators
$y_{21}, y_{22}, \ldots , y_{2s_{22}}$ for $H_2$ such that

$$y_{2i} \equiv x_{2i}^{d_{2i}} \quad modulo \quad \gamma_3F \quad for
\quad 1 \leq i \leq s_{21}$$

$$y_{2i} \equiv 1 \quad modulo \quad \gamma_3F \quad for \quad s_{21}
< i \leq s_{22}$$

Denote by $R_2$ the set of all $\{ y_{2i} / 1 \leq i \leq s_{21}
\}$. An element of $R_2$ will be called an $R-$basic of weight
2. Provided $\frac{R \cap \gamma_2F}{R \cap \gamma_3F}$ is generated
by $R_2$ this will give the structure of
$\frac{\frac{\gamma_2F}{\gamma_3F}}{\frac{R\cap \gamma_2F}{R\cap
    \gamma_3F}}$ and hence the structure of
$\frac{\gamma_2G}{\gamma_3G}$.

Of course this process can be continued and we can define a set of
$R-$basic of weight $n$ which will be a basis for $\frac{R \cap
  \gamma_nF}{R \cap \gamma_{n+1}F}$.

The process is really in a sense replacing a basic commutator which
corresponds non-trivially to a free generator modulo $\gamma_nF$ by
this free generator and consequently by any basic commutator which
contains this basic commutator as a constituent. The
following {\it basis theorem} follows from these constructions.

\begin{theorem} Every element $w$ in $R$ can be written uniquely
  in the form

$$ w \equiv r_1^{\alpha_1} r_2^{\alpha_2} \ldots r_t^{\alpha_t} \quad
  modulo \quad R \cap \gamma_{n+1}F$$

where the $r_1, r_2, \ldots ,r_t$ are the $R-$basic commutators of
weights $ \leq n$ and $r_1 < r_2 < \ldots < r_t$ and the $\alpha_i$
are integers.
\end{theorem}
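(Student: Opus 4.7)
The plan is to proceed by induction on $n$, exploiting the filtration
$$R \supseteq R \cap \gamma_2 F \supseteq R \cap \gamma_3 F \supseteq \cdots \supseteq R \cap \gamma_{n+1}F,$$
whose successive quotients $\frac{R \cap \gamma_k F}{R \cap \gamma_{k+1}F}$ embed via the natural map into the free abelian group $\gamma_k F / \gamma_{k+1} F$ on the weight-$k$ basic commutators of $F$, and so are themselves free abelian. The content built into the construction of the previous section is that, for each $k$, the set $R_k$ of $R$-basic commutators of weight $k$ is an ordered free abelian basis of $\frac{R \cap \gamma_k F}{R \cap \gamma_{k+1}F}$: the Nielsen/Smith-normal-form diagonalisation of the finite generating set $H_k$ against the basic-commutator basis of $\gamma_k F / \gamma_{k+1} F$ is designed precisely to produce such a basis.

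Granting this, existence of the normal form is an iterative reduction. The base case $n=1$ is immediate from $R_1$ generating $R/(R\cap\gamma_2 F)$. For the inductive step, given $w \in R$, the inductive hypothesis at level $n-1$ writes
$$w \equiv r_1^{\alpha_1} r_2^{\alpha_2} \cdots r_s^{\alpha_s} \pmod{R \cap \gamma_n F},$$
with $r_1 < \cdots < r_s$ the occurring $R$-basics of weights $\leq n-1$. Then $w' = (r_1^{\alpha_1}\cdots r_s^{\alpha_s})^{-1} w$ lies in $R \cap \gamma_n F$, and the basis $R_n$ yields $w' \equiv r_{s+1}^{\alpha_{s+1}} \cdots r_t^{\alpha_t} \pmod{R \cap \gamma_{n+1}F}$ with the weight-$n$ $R$-basics placed in order. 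Concatenating gives the required product; any re-ordering of weight-$n$ factors is harmless because $\gamma_n F/\gamma_{n+1}F$ is abelian.

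For uniqueness, suppose $r_1^{\alpha_1}\cdots r_t^{\alpha_t} \equiv 1 \pmod{R \cap \gamma_{n+1}F}$ and let $k$ be the smallest weight at which some exponent is non-zero. Projecting the relation into $\gamma_k F / \gamma_{k+1} F$ annihilates all factors of weight $> k$ and leaves a non-trivial integer relation among the images of the weight-$k$ $R$-basics; by the diagonalisation construction those images are non-zero multiples of distinct free-abelian generators of $\gamma_k F / \gamma_{k+1} F$, hence linearly independent over the integers. This contradicts the choice of $k$, so all exponents must vanish.

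The main obstacle is verifying the assertion, built into the construction, that $R_k$ really generates the full subgroup $(R \cap \gamma_k F)/(R \cap \gamma_{k+1}F)$ and not merely a proper subgroup; equivalently, that the candidate $H_k$ spans the image of $R \cap \gamma_k F$ in $\gamma_k F / \gamma_{k+1} F$. For $k=2$ this reduces to the observation that, modulo $\gamma_3 F$, commuting an $x_{1_j}$ past a word in the $y_{1_i}$ produces exactly the $\overline{R}_2$-basics $[y_{1_i}, x_{1_j}]$ together with torsion-lifting corrections coming from weight $1$. For general $k$ one applies the Hall--Witt/Jacobi identity displayed earlier in the paper to move generators of $R$ into second-component positions inside nested commutators, exhibiting any weight-$k$ element of $R$ modulo $\gamma_{k+1}F$ as a product of $\overline{R}_k$-basics and of lifted relations from lower weights. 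Once this spanning claim is secured for every $k \leq n$, the theorem follows at once from the filtration argument above.
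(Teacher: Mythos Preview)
Your proof is correct and follows the same filtration-and-diagonalisation approach that the paper has in mind; indeed the paper gives no separate argument for the theorem, merely asserting that it ``follows from these constructions'' and appending, after the corollary, the one-line hint that $R$ is generated by $\{r,[r,x]: r\in A,\ x\in F\}$, which is exactly the spanning claim you correctly isolate as the main obstacle. Your write-up is in fact more detailed than what the paper provides.
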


\medskip

\begin{corollary} $R_i$ generates $\frac{R \cap \gamma_iF}{R \cap
    \gamma_{i+1}F}$ freely.
\end{corollary}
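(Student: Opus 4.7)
The plan is to split the corollary into the two assertions hidden in the word ``freely'': (i) the set $R_i$ generates $\frac{R\cap\gamma_iF}{R\cap\gamma_{i+1}F}$, and (ii) the images of the elements of $R_i$ in this quotient are $\mathbb{Z}$-linearly independent. Note first that $\frac{R\cap\gamma_iF}{R\cap\gamma_{i+1}F}$ embeds via the natural map into $\gamma_iF/\gamma_{i+1}F$, which is free abelian on the basic commutators of weight $i$. Hence the ambient group is torsion-free abelian, and it suffices to establish both assertions after projection to $\gamma_iF/\gamma_{i+1}F$.

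For generation, I would pick $w\in R\cap\gamma_iF$ and invoke the preceding Basis Theorem with $n=i$, writing
\[
w \equiv w_{<i}\cdot w_{=i} \pmod{R\cap\gamma_{i+1}F},
\]
where $w_{=i}$ is a product of $R$-basics of weight exactly $i$ and $w_{<i}$ collects the contributions from $R_1,\dots,R_{i-1}$. Since each element of $R_i$ lies in $R\cap\gamma_iF$ and $R\cap\gamma_{i+1}F\subseteq R\cap\gamma_iF$, I get $w_{<i}\equiv 1\pmod{R\cap\gamma_iF}$. Now I apply the Basis Theorem again with $n=i-1$: $w_{<i}$ has a unique expression modulo $R\cap\gamma_iF$ as a product of $R$-basics of weight $\le i-1$, and the trivial expression already works, so by uniqueness every exponent in $w_{<i}$ is zero. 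Therefore $w\equiv w_{=i}\pmod{R\cap\gamma_{i+1}F}$, exhibiting $w$ as a product of elements of $R_i$.

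For free generation, I would unwind the construction of $R_i$. By the diagonalisation step that produces the $R$-basics of weight $i$, each $r\in R_i$ is congruent modulo $\gamma_{i+1}F$ to $x^{d}$ for some element $x$ drawn from a free $\mathbb{Z}$-basis of $\gamma_iF/\gamma_{i+1}F$ and some positive integer $d$, with distinct elements of $R_i$ using distinct basis elements $x$. Consequently any nontrivial integer combination $\prod_{r\in R_i} r^{\alpha_r}$ projects to a nontrivial element of the free abelian group $\gamma_iF/\gamma_{i+1}F$, hence does not lie in $\gamma_{i+1}F$ and therefore not in $R\cap\gamma_{i+1}F$. This is precisely linear independence of $R_i$ in $\frac{R\cap\gamma_iF}{R\cap\gamma_{i+1}F}$.

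The only subtle point, and what I expect to be the main obstacle, is the weight bookkeeping in the first step: one must be careful that ``weight $<i$'' contributions can be killed modulo $R\cap\gamma_iF$ (using the theorem at level $i-1$) while the overall congruence stays controlled modulo the finer subgroup $R\cap\gamma_{i+1}F$. Once this two-step application of uniqueness is accepted, both generation and independence drop out directly from the machinery already assembled in the Basis Theorem and the diagonalisation construction of $R_i$.
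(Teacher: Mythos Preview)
Your argument is correct. The two halves are handled cleanly: generation follows from two nested applications of the Basis Theorem (at levels $i$ and $i-1$) together with the uniqueness clause, and independence is read off from the Smith-normal-form congruences $y_{ij}\equiv x_{ij}^{d_{ij}}\pmod{\gamma_{i+1}F}$, since the $x_{ij}$ are part of a free basis and the $d_{ij}$ are positive. The weight-bookkeeping step you flagged is fine: once you observe $w_{<i}\in R\cap\gamma_iF$ (as a product of three elements of that subgroup), the level-$(i-1)$ uniqueness forces all its exponents to vanish, so $w_{<i}=1$ on the nose, not merely modulo something.

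The paper's justification is different in spirit. Rather than deducing the corollary from the Basis Theorem, it points back to the construction: since $R$ is the normal closure of the finite set $A$, one has $R=\langle r,\,[r,x]:r\in A,\,x\in F\rangle$, and iterating this commutator identity shows directly that the sets $H_i$ built at each stage already generate $(R\cap\gamma_iF)/(R\cap\gamma_{i+1}F)$; the diagonalisation then produces $R_i$ as a free generating set. So the paper argues generation \emph{before} (and as input to) the Basis Theorem, whereas you argue it \emph{after}, as a formal consequence. Your route is more self-contained once the theorem is in hand; the paper's route explains why the inductive machine works in the first place.
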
 

This can be seen by noting that $R$ is generated by $\{r \cup [r,x]\}$
for any $r \in A$ and any $x \in F$.


\begin{thebibliography}{99}
\bibitem{gruenberg} K. W. Gruenberg, ``Two theorems on Engel groups'',
  Proc. Camb. Phil. Soc, 49, 377-380, 1953. 
\bibitem{hall} P. Hall, {\it The Edmonton Notes on Nilpotent Groups},
  Queen Mary College Lecture Note Series 1969. 
\bibitem{coll}{\it The
    collected works of Philip Hall}, compiled by K.W. Gruenberg and
  J.E. Roseblade. Clarendon Press, Oxford University Press (New York,
  Oxford), 1988. 
\bibitem{mhall} M. Hall, {\it The Theory of Groups},  Originally
  published in 1959, Chelsea. Second edition 1999.
\bibitem{hur1} T. C. Hurley, ``Identifications in a free group'',
  J. Pure and Applied Alg., 48, 249-261, 1987. 
\bibitem{hur} Thaddeus C. Hurley, Sudarshan K. Sehgal, ``The Lie
  dimension subgroup conjecture'', J. Alg., 143(1), 46-56, 1991.
\bibitem{hur3} Ted Hurley, Sudarshan Sehgal, ``Groups related to Fox
  subgroups'', Comms. in Alg., 28(2), 1051-1059, 2000. 
\bibitem{hurward} T.C. Hurley, M.A. Ward, ``Bases for commutator
  subgroups of a free group'', Proc. Roy. Irish Ac., 96(A), 43-65,
  1996. 
\bibitem{lynshupp} Roger Lyndon and Paul Schupp, {\it Combinatorial group
  theory}, Springer-Verlag 1977, reprinted 2009. 
\bibitem{mks} W. Magnus, A. Karrass, D. Solitar, {\it Combinatorial Group
  Theory}, Wiley 1966, reprinted Dover 2004.
\bibitem{hneumann} Hanna Neumann, {\it Varieties of groups},
  Springer-Verlag, 1967. 
\bibitem{mward} Martin Ward, ``Basic commutators'',
  Phil. Trans. Roy. Soc. London, Ser. A (264), 343-412, 1969.
\bibitem{ward} Martin Ward,  ``Bases for Polynilpotent Groups'',
  Proc. London Math. Soc., 24 (3), 409-431, 1972. 
\bibitem{ward1} Martin Ward, ``The Third Term of the lower Central
  Series of a Free Group as a Subgroup of the Second'',
  J. Austral. Math. Soc., 16, 18-23, 1973.
\end{thebibliography}
\end{document}